\documentclass[12pt,reqno,twoside]{amsart}

\usepackage{amssymb,amsmath,amscd,enumerate,verbatim}
\usepackage[colorlinks=true,linkcolor=blue,citecolor=blue]{hyperref}
\usepackage[latin1]{inputenc}
\usepackage{color}

\usepackage[all]{xy}
\usepackage{pstricks, pst-3d}
%

%
%

%
%
\newcommand{\mm}{\mathfrak m}



%
\newcommand{\Z}{\mathbb{Z}}

\newcommand{\Q}{\mathbb{Q}}

%
%

%
%

%
%

\DeclareMathOperator{\pnt}{\raise 0.5mm \hbox{\large\bf.}}

\DeclareMathOperator{\Tor}{Tor}

\DeclareMathOperator{\reg}{reg}

\def\+#1{\relax\ifmmode\if\noexpand #1\relax \mathop{\kern
    0pt^+{#1}}\nolimits\else \kern 0pt^+\!#1 \fi\else$^*$#1\fi}

\newcommand{\exte}[1]{\left<#1\right>}

\let\phi=\varphi
\let\:=\colon

%
%

\newtheorem{thm}{\bf Theorem}[section]
\newtheorem{lem}[thm]{\bf Lemma}

\newtheorem{prop}[thm]{\bf Proposition}

\theoremstyle{definition}

\theoremstyle{plain}
\newtheorem*{thm*}{Theorem}
\newtheorem*{lem*}{Lemma}
\newtheorem*{cor*}{Corollary}
\newtheorem*{claim*}{Claim}
\newtheorem*{defn*}{Definition}

\theoremstyle{remark}
\newtheorem{rem}[thm]{Remark}

\newtheorem{ex}[thm]{Example}

%
%
\textwidth=17.2 cm \textheight=22 cm \topmargin=0.3 cm
\oddsidemargin=0.2 cm \evensidemargin=0.2 cm \footskip=40 pt

\numberwithin{equation}{section}
%
%


\title{Koszul hypersurfaces over the exterior algebras}

\author{Hop D. Nguyen}
\address{Ernst-Abbe-Platz 5, Appartment 605, 07743 Jena, Germany}
\email{ngdhop@gmail.com}
\thanks{This research was supported by the CARIGE foundation.}
\date{\today}

\begin{document}

\begin{abstract}
We prove that if $E$ is an exterior algebra over a field, $h$ is a quadratic form, then $E/(h)$ is Koszul if and only if $h$ is a product of two linear forms. 
\end{abstract}

\maketitle

\section{The main result}
Let $k$ be a field. In this note, we consider graded $k$-algebras which are quotients of the polynomial or the exterior algebras with the standard grading. Let $R$ be such a standard graded over $k$. We say that $R$ is a {\it Koszul algebra} if the residue field $k$ has a linear free resolution over $R$. For recent surveys on Koszul algebras, the reader may consult \cite{CDR}, \cite{Fr}.

While the Koszul property was studied intensively for commutative algebras, it is less well-known for quotients of exterior algebras. Any exterior algebra is Koszul because the Cartan complex is the linear free resolution for the residue field; see \cite[Section 2]{AHH}. The main result of this note says, quite unexpectedly, that exterior Koszul hypersurfaces are reducible. It answers in the positive an intriguing question of Phong Thieu in his thesis \cite[Question 5.2.8]{Th}.
\subsection*{Notation}
If $V$ is a $k$-vector space of dimension $n$ with a fixed $k$-basis $e_1,\ldots,e_n$, then the notation $k\exte{e_1,\ldots,e_n}$ denotes the exterior algebra $\bigwedge V$. It is a graded-commutative $k$-algebra with the grading induced by $\deg e_i=1$ for $1\le i\le n$, so that for all homogeneous elements $a,b\in E=k\exte{e_1,\ldots,e_n}$, we have
\[
a\wedge b=(-1)^{(\deg a)(\deg b)}b\wedge a
\]
and
\[
a\wedge a=0,
\]
if $\deg a$ is odd. For simplicity, we will denote $a\wedge b$ simply by $ab$. 

The main result of this note is
\begin{thm}
\label{main}
Let $E=k\exte{e_1,\ldots,e_n}$ be an exterior algebra (where $n\ge 1$) and $h$ a homogeneous form. Then $E/(h)$ is Koszul if and only if $h$ is a reducible quadratic form, namely $h$ is a product of two linear forms.
\end{thm}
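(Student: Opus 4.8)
The plan is to treat the two implications separately, reducing each to a normal-form computation for a single quadric.

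For the \emph{if} direction, suppose $h=\ell_1\ell_2$ with $\ell_1,\ell_2$ linear. If $\ell_1,\ell_2$ are dependent then $h=0$ and $E/(h)=E$ is Koszul, so I may assume them independent and, after a linear change of the $e_i$, that $h=e_1e_2$. Writing $E=k\exte{e_1,e_2}\otimes_k k\exte{e_3,\ldots,e_n}$ as a graded tensor product and using that $e_1e_2$ is central (even degree), one gets $E/(h)\cong\big(k\exte{e_1,e_2}/(e_1e_2)\big)\otimes_k k\exte{e_3,\ldots,e_n}$. The first factor is $k\oplus ke_1\oplus ke_2$ with zero multiplication in positive degrees, i.e.\ $T(W)/(W\otimes W)$ for $W=ke_1\oplus ke_2$; its quadratic dual is the free algebra $T(W^*)$, so it is Koszul. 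The second factor is an exterior algebra, hence Koszul, and a graded tensor product of Koszul algebras is Koszul.

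For the \emph{only if} direction I argue contrapositively. Since Koszul algebras are quadratic and the defining ideal of $E/(h)$ over the tensor algebra is generated by the quadratic relations of $E$ together with a lift of $h$, a minimal generator of degree $\deg h$ survives whenever $\deg h\ge 3$; hence $\deg h\le 2$. The degree-one case returns an exterior algebra (the degenerate boundary of the statement), so I take $h$ quadratic. Viewing $h$ as an alternating $2$-form and putting it in symplectic normal form, I may assume $h=e_1e_2+e_3e_4+\cdots+e_{2r-1}e_{2r}$, where $2r$ is the rank; here $h$ is a product of two linear forms exactly when $r\le 1$. The tensor-product splitting again peels off the unused variables, $E/(h)\cong\big(k\exte{e_1,\ldots,e_{2r}}/(h)\big)\otimes_k k\exte{e_{2r+1},\ldots,e_n}$, and since a tensor product is Koszul iff both factors are, it suffices to prove that the full-rank quotient $A_r:=k\exte{e_1,\ldots,e_{2r}}/(h)$ is \emph{not} Koszul for $r\ge 2$.

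This last step is the hard one. Multiplication by the symplectic form $h$ is the Lefschetz operator on $\bigwedge k^{2r}$, so hard Lefschetz gives $\dim (A_r)_m=\binom{2r}{m}-\binom{2r}{m-2}$ for $0\le m\le r$ and $(A_r)_m=0$ for $m>r$, whence $H_{A_r}(t)=\sum_{m=0}^{r}\big(\binom{2r}{m}-\binom{2r}{m-2}\big)t^m$. A Koszul algebra must have $1/H_{A_r}(-t)$ with nonnegative coefficients. For $r=2$ one has $H_{A_2}(t)=1+4t+5t^2$, and $1/(1-4t+5t^2)$ has complex poles $\tfrac{2\pm i}{5}$, hence oscillating, eventually negative coefficients (the degree-$6$ coefficient is already $-29$); this settles the base case $r=2$. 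The real obstacle is that this numerical test is \emph{inconclusive} for some larger ranks: for $r=3$ the series $1/H_{A_3}(-t)=1/(1-6t+14t^2-14t^3)$ has a dominant positive real pole and hence nonnegative coefficients throughout, so non-Koszulness cannot be read off degree by degree. The uniform route I would pursue is a reduction in rank: setting $e_{2r-1}=e_{2r}=0$ exhibits $A_{r-1}$ as the quotient of $A_r$ by two linear forms, and I would try to show this quotient map preserves Koszulness---most plausibly by proving it is a Golod homomorphism, or via a change-of-rings argument tailored to the fact that $h$ is a zero-divisor---thereby descending every $r\ge 2$ to the already-settled case $r=2$. I expect the main difficulty to lie exactly here, since quotients by linear forms do not preserve Koszulness in general and $h$ is not a non-zero-divisor, so the standard hypersurface change-of-rings machinery does not apply directly.
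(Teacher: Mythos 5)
Your ``if'' direction and your reduction of the ``only if'' direction to the full-rank quotients $A_r=k\exte{e_1,\ldots,e_{2r}}/(e_1e_2+\cdots+e_{2r-1}e_{2r})$ are sound, and the $r=2$ case via the Hilbert series $H_{A_2}(t)=1+4t+5t^2$ is exactly the paper's argument. But the heart of the theorem is the non-Koszulness of $A_r$ for $r\ge 3$, and there you have correctly diagnosed that the Hilbert-series test fails and then left the actual proof as a hope. The proposed descent --- that $A_{r-1}=A_r/(e_{2r-1},e_{2r})$ and that this quotient should preserve Koszulness --- is not a proof and is unlikely to become one along the lines you indicate: in an exterior algebra every linear form squares to zero, so $e_{2r-1},e_{2r}$ are zerodivisors on $A_r$, none of the regular-sequence or hypersurface change-of-rings results apply, and quotients of Koszul algebras by linear forms do not preserve Koszulness in general (the paper's own Remark 3.2(iii) gives a closely related failure). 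Establishing that this particular map is Golod, or otherwise transfers Koszulness, would be a substantial new argument, not a routine verification. So as written the proposal proves the theorem only for $n\le 5$ (where the rank is at most $2$) and is genuinely incomplete beyond that.

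The paper closes this gap by a completely different mechanism, which you may find worth comparing with your plan. It first proves an exterior analog of the syzygy bound of Avramov--Conca--Iyengar \cite{ACI1} (following \cite[Lemma 2.10]{Con}): if $Q\to R$ is a surjection of Koszul algebras, then $t^Q_i(R)\le 2i$ for all $i$, where $t^Q_i(R)=\sup\{j:\Tor^Q_i(R,k)_j\ne 0\}$. Since the exterior algebra $E$ is Koszul, any Koszul quotient $R=E/(h)$ must satisfy $t^E_2(R)\le 4$. On the other hand, McCullough's identity $e_1e_3\cdots e_{2r-1}\,h=0$ (in your coordinates) produces a minimal second syzygy of $R$ over $E$ in degree $r+2$ \cite{Mc}, so $t^E_2(R)\ge r+2>4$ as soon as $r\ge 3$. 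This kills all ranks $r\ge 3$ in one stroke and leaves only $r=2$ to be handled by the Hilbert-series computation you already have. Your tensor-product decomposition for the ``if'' direction (in place of the paper's Koszul filtration) and your use of the Lefschetz structure to compute $H_{A_r}$ are nice touches, but they do not substitute for this missing step.
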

This is in stark contrast with the fact that any commutative quadratic hypersurface is Koszul.

\section{Proof of the main result}
The main work in our result is done by the following 
\begin{thm}
\label{thm_rank2}
Let $h=e_1f_1+e_2f_2+\cdots+e_nf_n$ be a quadratic form in the exterior algebra in $2n$ variables $E=k\left<e_1,\ldots,e_n,f_1,\ldots,f_n\right>$, where $n\ge 2$. Then $E/(h)$ is not Koszul.
\end{thm}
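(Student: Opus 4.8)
The plan is to argue through Koszul duality at the level of Hilbert series. Since $R=E/(h)$ is quadratic (the defining relations of $E$ together with the quadric $h$ are all of degree $2$), if $R$ were Koszul then so would be its quadratic dual $R^{!}$, and the standard Koszul duality numerics would force the identity $H_R(-t)\,H_{R^{!}}(t)=1$. My strategy is to compute $H_{R^{!}}$ directly from the structure of $R^{!}$ and then show that this identity cannot hold.

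First I would identify $R^{!}$. Write $V=ke_1\oplus\cdots\oplus ke_n\oplus kf_1\oplus\cdots\oplus kf_n$, so that $E$ is the quadratic algebra on $V$ with relation space $\Sym^2 V\subseteq V\otimes V$, and $R$ is obtained by adjoining the single antisymmetric relation $\tilde h\in\bigwedge^2 V$ lifting $h$. Dualizing, $R^{!}$ is the quadratic algebra on the dual basis $x_1,\dots,x_n,y_1,\dots,y_n$ whose relations are all commutators orthogonal to $h$: concretely, all pairs of generators commute except that the diagonal commutators are forced to agree, $[x_i,y_i]=[x_j,y_j]=:z$ for all $i,j$, while $z$ itself is a nonzero element of degree $2$. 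Thus $R^{!}$ should be the enveloping algebra of the Heisenberg Lie algebra $\mathfrak h$ on $x_i,y_i,z$ with $[x_i,y_i]=z$ central, graded by $\deg x_i=\deg y_i=1$, $\deg z=2$. Note that the hypothesis $n\ge 2$ enters decisively here: for $n=1$ the whole of $\bigwedge^2 V$ is spanned by $\tilde h$, so $R^{!}$ is the free algebra and no obstruction appears, consistently with $e_1f_1$ being reducible.

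The main work, and the step I expect to be the real obstacle, is pinning down $H_{R^{!}}$. I would verify via the Diamond Lemma that the quadratic relations above form a confluent rewriting system with normal forms the ordered monomials $x^{\alpha}y^{\beta}z^{\gamma}$; equivalently, that the surjection $R^{!}\twoheadrightarrow U(\mathfrak h)$ is an isomorphism, i.e.\ that the quadratic relations already force $z$ to be central. The only genuinely computational point is resolving the overlap ambiguities that encode this centrality (the place where characteristic $2$ deserves separate care). Once this is done, the Poincar\'e--Birkhoff--Witt basis yields
\[
H_{R^{!}}(t)=\frac{1}{(1-t)^{2n}(1-t^2)}.
\]

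Finally I would conclude by a degree count. If $R$ were Koszul, the duality identity would give $H_R(-t)=1/H_{R^{!}}(t)=(1-t)^{2n}(1-t^2)$, a polynomial of degree $2n+2$ with nonzero leading coefficient. But $R$ is a quotient of the exterior algebra $E$ on $2n$ variables, which vanishes in degrees $>2n$, so $H_R$ is a polynomial of degree at most $2n$; the two cannot agree. Hence $R$ is not Koszul. It is worth recording that one can also compute $H_R$ explicitly from the hard Lefschetz theorem for the symplectic form $h$, giving $H_R(t)=\sum_{j=0}^{n}\bigl(\binom{2n}{j}-\binom{2n}{j-2}\bigr)t^j$; however the coefficients of $1/H_R(-t)$ turn out to be nonnegative already for $n\ge 3$, so the Hilbert series of $R$ alone does not detect non-Koszulness, and the computation of the dual $R^{!}$ is what makes the argument go through uniformly in $n$.
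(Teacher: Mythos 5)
Your argument is correct, and it takes a genuinely different route from the paper. The paper splits into two cases: for $n\ge 3$ it combines a bound $t_i^Q(R)\le 2i$ on syzygies over a Koszul algebra (an exterior analogue of a theorem of Avramov--Conca--Iyengar) with McCullough's observation that the identity $e_1\cdots e_nh=0$ produces a minimal second syzygy of $E/(h)$ in degree $n+2$; for $n=2$ it falls back on the Fr\"oberg identity $P^A_k(t)H_A(-t)=1$ and exhibits a negative coefficient in $1/H_A(-t)$. Your proof instead computes the quadratic dual once and for all and derives a contradiction uniformly in $n\ge 2$ from the degree of $1/H_{R^{!}}(t)$, with no case distinction and no input about higher syzygies. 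The one step you rightly flag as the crux --- that the quadratic relations of $R^{!}$ already force $z=[x_i,y_i]$ to be central, so that $R^{!}\cong U(\mathfrak{h})$ --- does go through: for $k\ne i$ the relations $[x_k,x_i]=[x_k,y_i]=0$ give $[x_k,[x_i,y_i]]=0$ directly, and then $[x_i,z]=[x_i,[x_j,y_j]]=0$ for any $j\ne i$, which is exactly where $n\ge 2$ is used; linear independence of the normal forms $x^{\alpha}y^{\beta}z^{\gamma}$ follows by mapping onto the PBW basis of $U(\mathfrak{h})$, valid in every characteristic. Your dimension count of the dual relation space ($2n^2-n-1=\binom{2n}{2}-1$) is also consistent. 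The resulting contradiction,
\[
H_R(-t)=(1-t)^{2n}(1-t^2)\quad\text{of degree }2n+2,\qquad\text{versus}\qquad \deg H_R\le 2n,
\]
is clean and, unlike the paper's $n=2$ computation, does not depend on locating a specific negative coefficient. What the paper's approach buys in exchange is finer structural information (explicit nonlinear syzygies in homological degree $2$ and the general regularity bound of Proposition~\ref{Koszul_syzygy}, which is of independent interest); what yours buys is uniformity and an explicit identification of the Koszul dual as the enveloping algebra of the Heisenberg Lie algebra, which also explains conceptually why the obstruction appears only for $n\ge 2$ (for $n=1$ the dual is free and no central element arises).
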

Let $M$ be a finitely generated graded $R$-module. If $R$ is a quotient of an exterior algebra, this means that $M$ is finitely generated,  graded, and on $M$ there are left and right $R$-actions such that 
$$
am=(-1)^{(\deg a)(\deg m)}ma
$$
for every homogeneous elements $a\in R, m\in M$.

Denote $\beta^R_{i,j}(M)=\dim_k \Tor^R_i(M,k)_j$ the $(i,j)$-graded Betti number of $M$, where $i,j\in \Z$. The $i$th total Betti number of $M$ is $\beta^R_i(M)=\dim_k \Tor^R_i(M,k)$. We let $t^R_i(M)=\sup\{j:\beta^R_{i,j}(M)\neq  0\}$ for each $i\in \Z$. The last theorem will be deduced from the following analog of \cite[Main Theorem, (1)]{ACI1}, which is about syzygies of Koszul algebras in the commutative setting.
\begin{prop}
\label{Koszul_syzygy}
Let $Q\to R$ be a surjection of Koszul algebras. Let $K_{\pnt}$ be the minimal free resolution of $k$ over $Q$. Denote by $Z_i$ the $i$-th cycle of the complex $K_{\pnt}\otimes_Q R$. Then $\reg_R Z_i\le \reg_R Z_{i-1}+2$ for all $i\ge 0$. In particular, $t_i^Q(R)\le 2i$ for all $i\ge 0$.
\end{prop}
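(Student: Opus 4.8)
The plan is to analyze the complex $F_{\pnt}=K_{\pnt}\otimes_Q R$ of free $R$-modules directly. Because $Q$ is Koszul, the resolution $K_{\pnt}$ is linear, so $F_i=K_i\otimes_Q R\cong R(-i)^{\beta^Q_i(k)}$ and the differentials of $F_{\pnt}$ have linear entries; its homology is $H_i=\Tor^Q_i(k,R)$, a finite-dimensional graded $k$-vector space. Writing $B_i=\Img(F_{i+1}\to F_i)$ for the boundaries, I would work with the two families of short exact sequences
\[
0\to Z_i\to F_i\to B_{i-1}\to 0,\qquad 0\to B_i\to Z_i\to H_i\to 0,
\]
and feed them into the formal behavior of Castelnuovo--Mumford regularity over the Koszul algebra $R$.

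The two regularity facts I would set up first are: (a) since $R$ is Koszul, $\reg_R R=0$, hence $\reg_R F_i=i$; and (b) the standard inequality $\reg_R A\le\max\{\reg_R B,\reg_R C+1\}$ for a short exact sequence $0\to A\to B\to C\to 0$, which follows from the long exact sequence in $\Tor^R(-,k)$ and holds verbatim in the graded-commutative setting. I also need two comparisons for the homology. First, $H_i$ is annihilated by the maximal ideal, so $\reg_R H_i$ equals its top nonzero degree, which by the symmetry $\Tor^Q_i(k,R)\cong\Tor^Q_i(R,k)$ is exactly $t^Q_i(R)$. Second, applying the right-exact functor $-\otimes_R k$ to the surjection $Z_i\to H_i$ gives $\beta^R_{0,j}(H_i)\le\beta^R_{0,j}(Z_i)$ for every $j$, so
\[
t^Q_i(R)=\reg_R H_i=t^R_0(H_i)\le t^R_0(Z_i)\le\reg_R Z_i .
\]
Finally, since $Z_i\subseteq F_i$ lives in degrees $\ge i$, its minimal generators sit in degrees $\ge i$, giving the lower bound $\reg_R Z_i\ge t^R_0(Z_i)\ge i$ whenever $Z_i\neq 0$.

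With these in hand the recursion is short. Applying fact (b) to the first sequence gives $\reg_R Z_i\le\max\{i,\reg_R B_{i-1}+1\}$, and to the second (with index $i-1$) gives $\reg_R B_{i-1}\le\max\{\reg_R Z_{i-1},\reg_R H_{i-1}+1\}$. Combining these with the comparison $\reg_R H_{i-1}\le\reg_R Z_{i-1}$ yields $\reg_R Z_i\le\max\{i,\reg_R Z_{i-1}+2\}$, and the lower bound $\reg_R Z_{i-1}\ge i-1$ collapses the maximum to $\reg_R Z_{i-1}+2$. In our setting this lower bound is always available: for a nonzero socle element $s$ of $R$ and any basis vector $e$ of $F_{i-1}$, the linear differential kills $s\,e$ (linear forms annihilate the socle), so $s\,e$ is a nonzero cycle and $Z_{i-1}\neq0$. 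Since $Z_0=F_0=R$ has $\reg_R Z_0=0$, induction then gives $\reg_R Z_i\le 2i$, and the displayed comparison $t^Q_i(R)\le\reg_R Z_i$ finishes the ``in particular''.

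The main obstacle is less the bookkeeping than securing the regularity machinery over the exterior algebra: one must know that $\reg_R$ of a finitely generated module is finite and obeys the short-exact-sequence inequality in the graded-commutative setting, and pin down the identity $\reg_R H_i=t^Q_i(R)$ together with the generator comparison $t^R_0(H_i)\le t^R_0(Z_i)$. Once those are in place the inductive step is purely formal, and the only remaining care is ensuring that no cycle module $Z_i$ degenerates to zero, which the socle argument rules out.
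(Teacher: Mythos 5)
Your proof is correct and follows essentially the same route as the paper's: the same two short exact sequences relating $Z_i$, $C_i=F_i$, $B_{i-1}$, $H_{i-1}$, the same regularity inequalities, the comparison $\reg_R H_{i-1}=t^R_0(H_{i-1})\le t^R_0(Z_{i-1})$ via the surjection $Z_{i-1}\to H_{i-1}$, and the lower bound $\reg_R Z_i\ge i$ to collapse the maximum. The only additions are cosmetic (the socle argument that $Z_i\neq 0$ and the explicit appeal to finiteness of $\reg_R$ over a Koszul algebra), which the paper leaves implicit.
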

\begin{proof}
The proof is similar to that of \cite[Lemma 2.10]{Con}. Denote by $B_{\pnt},H_{\pnt}$ the boundary and homology of the complex $C_{\pnt}=K_{\pnt}\otimes_Q R$. For each $i>0$, we have short exact sequences
\begin{align*}
0\to Z_i \to C_i \to B_{i-1}\to 0,\\
0 \to B_{i-1} \to Z_{i-1} \to H_{i-1} \to 0.
\end{align*}
Therefore combining with the fact that $K$ is a linear resolution, we get
\begin{align*}
\reg_R Z_i &\le \max\{\reg_R C_i,\reg_R B_{i-1}+1\}\\
           &\le \max\{i,\reg_R Z_{i-1}+1,\reg_R H_{i-1}+2\}.
\end{align*}
Let $\mm$ be the graded maximal ideal of $R$. Since $H_i\cong \Tor^Q_i(k,R)$, we have $\mm H_i=0$ for all $i\ge 0$. Now $\reg_R k=0$ since $R$ is a Koszul algebra, hence $\reg_R H_{i-1}=t^R_0(H_{i-1})\le t^R_0(Z_{i-1})\le \reg_R Z_{i-1}$. Hence
\[
\reg_R Z_i \le \max\{i,\reg_R Z_{i-1}+2\}.
\]
As $Z_i\subseteq C_i$ and $t^R_0(C_i)=i$, we get $\reg_R Z_i\ge t^R_0(Z_i)\ge i$. Therefore $\reg_R Z_i\le \reg_R Z_{i-1}+2$. By induction, $\reg_R Z_i\le 2i$ for all $i\ge 0$. For the second statement, we have $t^Q_i(R)=t^Q_0(H_i)=t^R_0(H_i)\le t^R_0(Z_i) \le \reg_R Z_i\le 2i$.
\end{proof}

\begin{lem}
\label{lem_second_syzygy}
Let $R=E/(h)$ where $E=k\left<e_1,\ldots,e_n,f_1,\ldots,f_n\right>$ and $h=e_1f_1+e_2f_2+\cdots+e_nf_n$. Then $\beta_{2,n+2}^E(R)\ne 0$.
\end{lem}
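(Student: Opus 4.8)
The plan is to identify $\beta^E_{2,n+2}(R)$ with the number of minimal generators of the annihilator $\Ann_E(h)=\{a\in E:ah=0\}$ lying in internal degree $n$, and then to exhibit one such generator explicitly. Since $h$ has even degree it is central, so $(h)$ is a two-sided ideal, and the minimal graded free resolution of $R$ over $E$ begins
\[
\cdots \longrightarrow F_2 \longrightarrow E(-2)\xrightarrow{\ \cdot h\ } E \longrightarrow R\longrightarrow 0,
\]
this presentation being minimal because $h\in\mm^2$, where $\mm$ is the graded maximal ideal of $E$. In particular $F_1=E(-2)$, and the first syzygy module is exactly $\ker\bigl(E(-2)\xrightarrow{h}E\bigr)=\Ann_E(h)(-2)$. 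Hence
\[
\beta^E_{2,n+2}(R)=\dim_k\bigl(\Ann_E(h)/\mm\Ann_E(h)\bigr)_n ,
\]
so it suffices to produce a nonzero minimal generator of $\Ann_E(h)$ in degree $n$.

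The candidate is $g=e_1e_2\cdots e_n$. It is nonzero, and it lies in $\Ann_E(h)$ because $g\,e_i=0$ for every $i$, whence $gh=\sum_i g e_i f_i=0$. The task is then to show $g\notin\mm\Ann_E(h)$. As $\Ann_E(h)$ is a two-sided ideal and $\mm$ is generated in degree one, we have $\bigl(\mm\Ann_E(h)\bigr)_n=\sum_v v\,(\Ann_E(h))_{n-1}$, the sum running over the $2n$ variables $v\in\{e_1,\dots,e_n,f_1,\dots,f_n\}$. Assume for contradiction that $g=\sum_v v\,a_v$ with $a_v\in(\Ann_E(h))_{n-1}$. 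Working in the monomial $k$-basis $\{x_S\}$ of $E$ indexed by subsets $S$ of the variables, I would compare coefficients of the monomial $e_1\cdots e_n$ on both sides: a product $v\,x_S$ can equal $\pm e_1\cdots e_n$ only if $v=e_j$ and $x_S=e_1\cdots\widehat{e_j}\cdots e_n$, so some $a_{e_j}$ must carry a nonzero coefficient on $e_1\cdots\widehat{e_j}\cdots e_n$.

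The heart of the matter is thus the combinatorial claim that every $a\in(\Ann_E(h))_{n-1}$ has coefficient zero on $e_1\cdots\widehat{e_j}\cdots e_n$, for each $j$. To prove it I would expand $0=ah=\sum_i a\,e_if_i$ and read off the coefficient of the degree-$(n+1)$ monomial $e_1\cdots e_n f_j$: a summand $x_S e_i f_i$ can produce this monomial only when both $e_i,f_i$ lie in $\{e_1,\dots,e_n,f_j\}$, which forces $i=j$ and $S=\{e_1,\dots,e_n\}\setminus\{e_j\}$. Consequently that coefficient equals $\pm$(the coefficient of $e_1\cdots\widehat{e_j}\cdots e_n$ in $a$), and its vanishing is exactly the claim. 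This contradicts the preceding paragraph and completes the argument.

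The main obstacle is the minimality statement $g\notin\mm\Ann_E(h)$: at first sight it appears to demand full control of $(\Ann_E(h))_{n-1}$, equivalently injectivity of multiplication by $h$ from $\bigwedge^{n-1}$ into $\bigwedge^{n+1}$. That injectivity is a hard-Lefschetz-type property of the symplectic form $h$ and may fail in positive characteristic. The key observation I would emphasize is that the full injectivity is not needed: only the single coefficient on each $e_1\cdots\widehat{e_j}\cdots e_n$ must vanish, and this one isolated piece is forced directly by a single linear relation coming from $ah=0$, uniformly in the characteristic of $k$.
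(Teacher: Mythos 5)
Your argument is correct and rests on the same key identity as the paper, namely that $e_1\cdots e_n\,h=0$ supplies a second syzygy of $R$ in degree $n+2$; the paper simply cites the proof of \cite[Theorem 1.2]{Mc} for the minimality of this syzygy, whereas you verify it directly (and correctly) by reading off the coefficient of $e_1\cdots e_nf_j$ in $ah=0$ to show that no element of $(\Ann_E(h))_{n-1}$ can contribute the monomial $e_1\cdots e_n$. So this is essentially the paper's proof with the delegated details filled in, valid in any characteristic as you note.
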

\begin{proof}
This follows since the identity $e_1\cdots e_nh=0$ gives a minimal second syzygy of degree $n+2$ of $R$ as an $E$-module; see the proof of \cite[Theorem 1.2]{Mc}.
\end{proof}
\begin{proof}[Proof of Theorem \ref{thm_rank2}]
For $n\ge 3$, from Lemma \ref{lem_second_syzygy} and Proposition \ref{Koszul_syzygy}, we get the result. For $n=2$ then $E=k\left<e_1,e_2,f_1,f_2\right>$, $h=e_1f_1+e_2f_2$. Denote $A=E/(h)$, and 
\[
P^A_k(t)=\sum_{i=0}^{\infty}\beta^R_i(k)t^i\in \Z[[t]]
\]
the Poincar\'e series of $k$. If $A$ were Koszul, we obtain an equality
\[
P^A_k(t)H_A(-t)=1,
\]
where $H_A(t)$ is the Hilbert series of $A$. Note that $H_A(t)=1+4t+5t^2$, hence
\[
1/H_A(-t)=1+4t+11t^2+24t^3+41t^4+44t^5-29t^6-\cdots
\]
which cannot be the non-negative series $P^A_k(t)$. Hence $A$ is not Koszul.
\end{proof}

\begin{ex}
Consider $E=\Q[e_1,e_2,e_3,f_1,f_2,f_3]$ and $h=e_1f_1+e_2f_2+e_3f_3$. The Betti table of the minimal free resolution of $\Q$ over $E/(h)$ is given by Macaulay2 \cite{GS} as follow
\begin{verbatim}
            0 1  2  3   4    5    6     7
     total: 1 6 22 76 302 1272 5189 20614
         0: 1 6 22 62 148  314  610  1106
         1: . .  .  .   .    .    .     .
         2: . .  . 14 154  958 4383 16372
         3: . .  .  .   .    .    .     .
         4: . .  .  .   .    .  196  3136
\end{verbatim}
\end{ex}

\begin{proof}[Proof of Theorem \ref{main}]
We can assume that $h$ is a quadratic form.

The ``if" part is easy and was pointed out in \cite[Theorem 5.1.5]{Th}; we include an argument here. There is nothing to do if $n=1$ or $h=0$, so we can assume that $n\ge 2$ and $h\neq 0$. If $h$ is reducible, by a suitable change of coordinates, we can assume that $h=e_1e_2$. Then $E/(h)$ has a Koszul filtration in the sense of \cite{CTV} hence it is Koszul. 

The ``only if" part: if $h$ is not reducible, by a suitable change of coordinates, we can assume that $h=e_1e_2+e_3e_4+\cdots+e_{2i-1}e_{2i}$ for some $2\le i\le n/2$. Then $k\exte{e_1,\ldots,e_{2i}}/(h)$ is not Koszul by Theorem \ref{thm_rank2}, and it is an algebra retract of $E/(h)$. Therefore $E/(h)$ is also not Koszul.
\end{proof}

\section{Final remarks}
\begin{rem}
The theory of Gr\"obner basis was extended to the exterior algebras by Aramova, Herzog and Hibi in \cite{AHH}. It is also true as in the commutative case that, if $J$ is a homogeneous ideal in the exterior algebra $E$ and $J$ has quadratic Gr\"obner basis with respect to some term order on $E$, then $E/J$ is Koszul (see \cite[Theorem 5.1.5]{Th}).

Note however that the ideal $I=(e_1f_1+e_2f_2+\cdots+e_nf_n)$ of $E=k\left<e_1,\ldots,e_n,f_1,\ldots,f_n\right>$ (where $n\ge 2$) does not have a quadratic Gr\"obner basis in the natural coordinates $e_1,\ldots,e_n,f_1,\ldots,f_n$. Indeed, we can assume that $e_1f_1$ is the initial form of $e_1f_1+e_2f_2+\cdots+e_nf_n$. Now clearly $e_1(e_2f_2+\cdots+e_nf_n)\in I$, and its initial form is not divisible by $e_1f_1$, so any Gr\"obner basis of $I$ must contain a cubic form.
\end{rem}

\begin{rem}
(i) Note that the ideal $J=(e_1e_2-f_1f_2,e_1f_1-e_2f_2)$ of $E=k\left<e_1,e_2,f_1,f_2\right>$ is generated by two irreducible quadratic forms, but $E/J$ is Koszul. In fact, it has a Koszul filtration; see \cite[Example 5.2.2(ii)]{Th}. On the other hand, it is shown in {\it loc.~ cit.} that $J$ has no quadratic Gr\"obner basis in the natural coordinates.

(ii) Note that if $E=k\exte{e_1,\ldots,e_n}$ be an exterior algebra and $I$ be an ideal generated by reducible quadratic forms then $E/I$ is not Koszul in general. For example, $E=\Q\exte{e_1,\ldots,e_5}$ and $I=(e_1e_2,e_3e_4,(e_1+e_3)e_5)$ then $E/I$ is not Koszul, since the Betti table of $\Q$ over $E/I$ is
\begin{verbatim}
             0 1  2  3   4
      total: 1 5 18 57 171
          0: 1 5 18 56 160
          1: . .  .  1  11
\end{verbatim}
(iii) Many results on the exterior algebras have analogues over commutative complete intersections; see, e.g., \cite{AAH}, \cite{AHH}. However, if $S=k[x_1,\ldots,x_n]/(x_1^2,\ldots,x_n^2)$ and $h$ a reducible quadratic form, $S/(h)$ is not Koszul in general. For example, take $S=\Q[x,y,z,t]/(x^2,y^2,z^2,t^2)$ and $h=x(y+z+t)$ then $S/(h)$ is not Koszul since the resolution of $\Q$ over $S/(h)$ is
\begin{verbatim}
             0 1  2  3  4
      total: 1 4 11 27 66
          0: 1 4 11 25 51
          1: . .  .  2 15
\end{verbatim}
\end{rem}

\end{document}